\newtheorem{theo}{Theorem}[section]
\newtheorem{lemma}[theo]{Lemma}
\newtheorem{cor}[theo]{Corollary}
\begin{document}
\title{A note on knots with H(2)-unknotting number one}
\author{Yuanyuan Bao}
\address{
Department of Mathematics,
Tokyo Institute of Technology,
Oh-okayama, Meguro, Tokyo 152-8551, Japan
}
\email{bao.y.aa@m.titech.ac.jp}
\date{}
\begin{abstract}
We give an obstruction to unknotting a knot by adding a twisted band, derived from Heegaard Floer homology.
\end{abstract}
\keywords{H(2)-unknotting number, Goeritz matrix, Ozsv{\'a}th-Szab{\'o} correction term, Heegaard Floer homology}
\subjclass[2010]{Primary 57M27 57M25 57M50}
\thanks{The author is supported by scholarship from
the Ministry of Education, Culture, Sports, Science and Technology of Japan.}
\maketitle

\section{introduction}
Many unknotting operations have been defined and studied in knot theory. For example, as well-known, (a), (b) (cf. \cite{MR915115, MR995383}) and (c) in Figure~\ref{fig:fig1} are three types of unknotting operations. Especially, (c) was introduced by Hoste, Nakanishi and Taniyama \cite{MR1075165}, which they called H($n$)-move. Here $n$ is the number of arcs inside the circle. Note that an H($n$)-move is required to preserve the component number of the diagram. The H($n$)-unknotting number of a knot is the minimal number of H($n$)-moves needed to change the knot into the unknot. In this note, we focus on the special case when $n$ equals two. Given two knots $K$ and $K'$, when $K'$ is obtained from $K$ by applying an H(2)-move, we also alternatively say that $K'$ is obtained from $K$ by adding a twisted band, as shown in Figure~\ref{fig:fig4}. We only choose those bands for which the diagrams before and after represent knots. Following \cite{MR1075165}, we denote the H(2)-unknotting number of a knot $K$ by $u_{2}(K)$. In this note, we give a necessary condition for a knot $K$ to have $u_{2}(K)=1$, by using a method introduced by Ozsv{\'a}th and Szab{\'o} \cite{MR2136532}. 

The question whether a given knot has H(2)-unknotting number one should be traced back to Riley. He made the conjecture that the figure-eight knot could never be unknotted by adding a twisted band. Lickorish confirmed this conjecture in \cite{MR859958}. Here we give a brief review of his method. Given a knot $K$, let $\Sigma (K)$ denote the double-branched cover of $S^{3}$ along $K$ and let $\lambda : H_{1}(\Sigma (K))\times H_{1}(\Sigma (K)) \rightarrow{\mathbb Q}/{\mathbb Z}$ be the linking form of $\Sigma(K)$. Lickorish proved that if the knot $K$ can be unknotted by adding a twisted band, then $H_{1}(\Sigma (K))$ is cyclic and it has a generator $g$ such that $\lambda(g,g)=\pm 1/{\rm det}(K)$, where ${\rm det}(K)$ is the determinant of $K$.
For the figure-eight knot $4_{1}$, the linking form has the form $\lambda(g,g)=2/5$ for some generator $g\in H_{1}(\Sigma (4_{1}))\cong {\mathbb Z}/5{\mathbb Z}$. If there is another generator $g'=xg$ such that $\lambda(g',g')=\pm 1/5$, we have $2x^{2}\equiv \pm 1 \pmod{5}$. There is no such an integer $x$ satisfing the condition. Therefore Riley's conjecture holds.

\begin{figure}[h]
	\centering
		\includegraphics[width=0.6\textwidth]{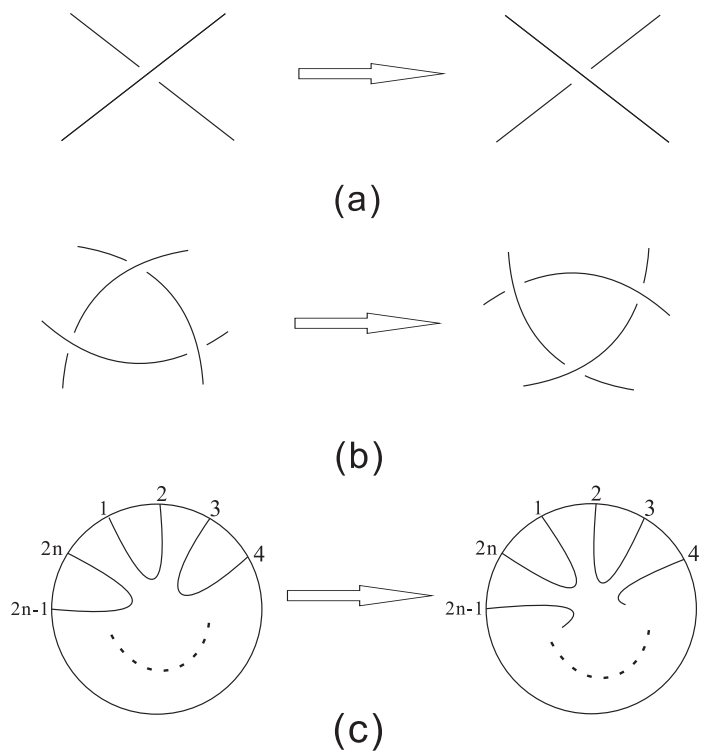}
	\caption{Some unknotting operations.}	
	\label{fig:fig1}
\end{figure}

\begin{figure}
	\centering
		\includegraphics[width=0.55\textwidth]{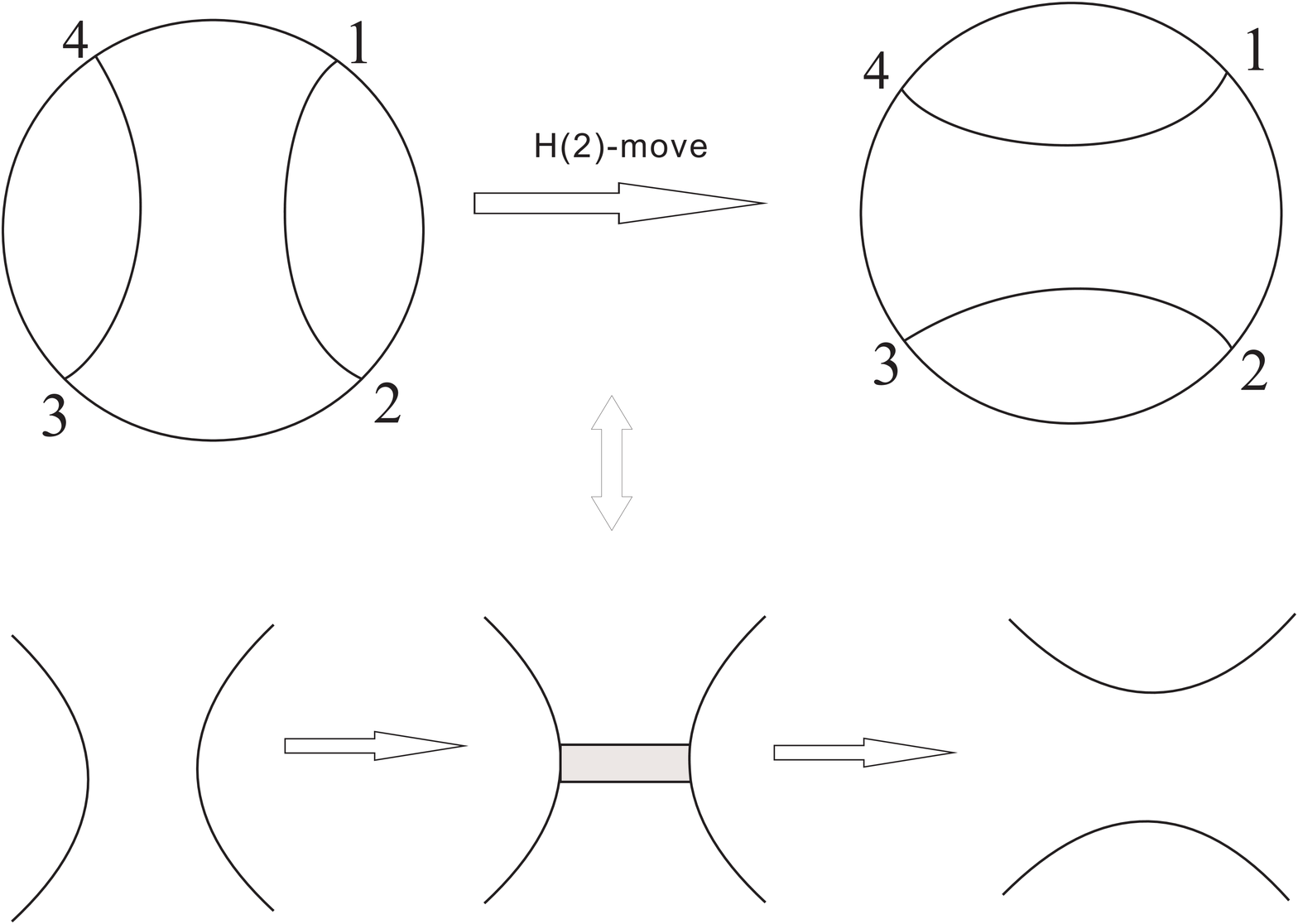}
		\caption{Adding a twisted band to a knot diagram.}
	\label{fig:fig4}
\end{figure}

Now we turn to the description of our result.
Consider a positive-definite symmetric $n\times n$ matrix $Q$ over ${\mathbb Z}$. Suppose ${\rm det}(Q)$ is $p$. Then $Q$ as a presentation determines a group $G$. A characteristic vector for $Q$ is an element in 

\begin{equation*}
\begin{split}
{\rm char}(Q)&=\left\{\xi\in {\mathbb Z}^{n} | \xi^{t}v\equiv v^{t}Qv \pmod{2} \text{ for any $v\in {\mathbb Z}^{n}$}\right\}\\
&=\left\{\xi\in {\mathbb Z}^{n} | \xi_{i}\equiv Q_{ii} \pmod{2}\right\}.
\end{split}
\end{equation*}

Two characteristic vectors $\xi$ and $\zeta$ are said to be equivalent if $Q^{-1}(\xi-\zeta)\in {\mathbb Z}^{n}$. Suppose $p$ is odd, and consider the map (cf. \cite{MR2388097, MR2136532})
\begin{equation*}
M_{Q}: G \longrightarrow {\mathbb Q}
\end{equation*} 
defined by 
\begin{equation*}
M_{Q}(\alpha)=\min\left\{\left.\frac{\xi^{t}Q^{-1}\xi-n}{4}\right|\xi\in {\rm char}(Q), [\xi]=\alpha\in G\right\}. 
\end{equation*}
The map is well-defined up to an automorphism of $G$. 

Now we recall the definition of Goeritz matrix. Given a knot diagram, color this diagram in checkerboard fashion such that the unbounded region has black color. Let $f_{0}, f_{1}, \ldots, f_{k}$ denote the black regions and $f_{0}$ correspond to the unbounded one. Define the sign of a crossing as in Figure~\ref{fig:fig2}. Then the Goeritz matrix $Q$ is the $k\times k$ symmetric matrix defined as follows
\begin{equation}
q_{ij}=
\begin{cases}
\text{the signed count of crossings adjacent to $f_{i}$} & \text{if $i=j$},\\
\text{minus the signed count of crossings joining $f_{i}$ and $f_{j}$} & \text{if $i\neq j$}
\end{cases}
\end{equation}
for $i,j=1,2,\ldots, k$.

\begin{figure}
	\centering
		\includegraphics[width=0.6\textwidth]{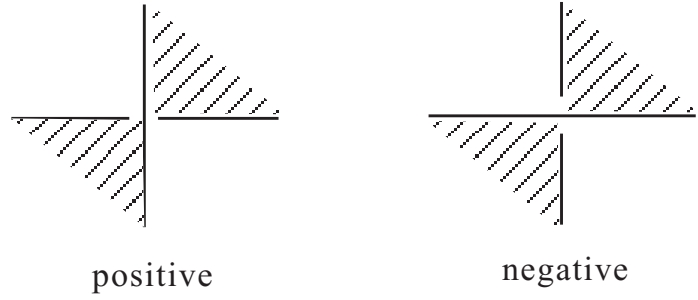}
		\caption{The sign convention of a crossing.}
	\label{fig:fig2}
\end{figure}

Our result about H(2)-unknotting number is as follows:
\begin{theo}
\label{main}
Let $K$ be an alternating knot with determinant $p$, and let $Q$ be the positive-definite Goeritz matrix corresponding to a reduced alternating diagram of $K$ or its mirror image. Suppose $G$ is the group presented by $Q$. If $u_{2}(K)=1$, then there is an isomorphism $\phi : {\mathbb Z}/|p|{\mathbb Z}\longrightarrow G$ and a sign $\epsilon \in \{+1,-1\}$ with the properties that for all $i\in {\mathbb Z}/|p|{\mathbb Z}$:
$$I_{\phi, \epsilon}(i):=\epsilon\cdot M_{Q}(\phi(i))-\frac{1}{4}(\frac{1}{|p|}(\frac{|p|+(-1)^{i}|p|}{2}-i)^{2}-1)=0 \pmod{2},$$
$$\text{and}\quad I_{\phi, \epsilon}(i)\leq 0.$$
\end{theo}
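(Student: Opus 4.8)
The plan is to translate the hypothesis $u_2(K)=1$ into a surgery description of the double branched cover $\Sigma(K)$, to rewrite the correction terms of $\Sigma(K)$ through the Goeritz matrix $Q$, and then to feed this into the surgery formula for correction terms; the two displayed conditions will fall out as the single statement that a certain difference of correction terms is a non-positive even integer. To begin with the geometric input: since $u_2(K)=1$, the knot $K$ is obtained from the unknot $U$ by adding one twisted band, and the trace of this H(2)-move is a band cobordism $F\subset S^3\times[0,1]$ from $K$ to $U$. Passing to double branched covers converts this band into a single two-handle, exhibiting $\Sigma(K)$ as the result of Dehn surgery along a knot $\kappa\subset \Sigma(U)=S^3$. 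Because $|H_1(\Sigma(K))|=\det(K)=|p|$, the surgery coefficient is forced to be $\pm|p|$; in particular $G\cong H_1(\Sigma(K))\cong \mathbb Z/|p|\mathbb Z$ is cyclic, and this is exactly what supplies the isomorphism $\phi$ (only its existence can be asserted, as it is determined only up to an automorphism of $\mathbb Z/|p|\mathbb Z$). After possibly replacing $K$ by its mirror image — which exchanges $\Sigma(K)$ for $-\Sigma(K)$ and $Q$ for the Goeritz matrix of the mirror — I may arrange the surgery on $\kappa$ to be positive, recording this orientation choice in the sign $\epsilon$.

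Next I would rewrite the left-hand term. For a reduced alternating diagram the positive-definite Goeritz matrix $Q$ bounds a sharp four-manifold, so, for the correct orientation, $\Sigma(K)$ bounds a negative-definite manifold with intersection form $-Q$. By Ozsv{\'a}th--Szab{\'o}'s computation of the correction terms of branched double covers of alternating knots \cite{MR2136532}, the correction term $d(\Sigma(K),\mathfrak s)$ equals, up to the global sign $\epsilon$, the value of $M_Q$ on the class of $\mathfrak s$ in $G$; that is, once the $\mathrm{Spin}^c$ structures are matched with $G$ via $\phi$, one has $\epsilon\cdot M_Q(\phi(i))=d(\Sigma(K),\mathfrak s_{\phi(i)})$.

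Finally I would invoke the surgery formula for correction terms. Writing $\Sigma(K)=S^3_{\pm|p|}(\kappa)$, the formula of Ozsv{\'a}th--Szab{\'o} (in the form later sharpened by Ni--Wu) gives $d(\Sigma(K),i)=d(S^3_{\pm|p|}(U),i)-2N_i$, where each $N_i\ge 0$ is a non-negative integer assembled from the local invariants $V_\bullet(\kappa)\ge 0$. The numbers $d(S^3_{\pm|p|}(U),i)$ are the correction terms of the lens space $L(|p|,1)$ (with the appropriate orientation), and a direct calculation shows that, under the labeling matched to $\phi$, they are exactly $\tfrac14\big(\tfrac1{|p|}\big(\tfrac{|p|+(-1)^{i}|p|}{2}-i\big)^2-1\big)$. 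Subtracting, $I_{\phi,\epsilon}(i)=\epsilon\cdot M_Q(\phi(i))-d(S^3_{\pm|p|}(U),i)=d(\Sigma(K),i)-d(S^3_{\pm|p|}(U),i)=-2N_i$, which is simultaneously $\le 0$ and $\equiv 0\pmod 2$, as claimed.

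The hard part will be the first step: proving that the H(2)-move lifts to an \emph{integral} surgery with coefficient $\pm|p|$ (the framing computation for the lifted two-handle), and then aligning the three labelings of $\mathbb Z/|p|\mathbb Z$ that occur — the one coming from the surgery presentation, the one coming from $M_Q$ and the Goeritz form, and the parity-interleaved one built into the statement — so that the model term takes precisely the stated closed form. By contrast, the explicit evaluation of $d(L(|p|,1))$ and the non-negativity and integrality of the $V_\bullet(\kappa)$ are standard and present no real difficulty.
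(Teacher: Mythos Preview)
Your argument is correct and runs parallel to the paper's, sharing the two main ingredients: the Montesinos trick, which the paper simply cites to obtain $\Sigma(K)=\epsilon\cdot S^3_{|p|}(C)$ for some knot $C\subset S^3$, and the Ozsv{\'a}th--Szab{\'o} identification $d(\Sigma(K),\varphi(\alpha))=M_Q(\alpha)$ for alternating $K$. The divergence is in the last step. The paper does not invoke the Ni--Wu surgery formula; instead it observes that $\epsilon\cdot\Sigma(K)=S^3_{|p|}(C)$ bounds the $2$-handle trace $W$ with intersection form $A=(|p|)$, applies the general positive-definite bounding theorem (their Theorem~\ref{os1}) to get $\epsilon\cdot d(\Sigma(K),\phi(i))\le M_A(i)$ and $\equiv M_A(i)\pmod 2$, and then computes $M_A(i)$ directly to be the displayed model term. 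Your route through Ni--Wu yields the same two conclusions and in fact a little more, since it identifies $I_{\phi,\epsilon}(i)$ with $-2\max\{V_i(\kappa),V_{|p|-i}(\kappa)\}$; the trade-off is that it relies on a later and more specialized result, whereas the paper's argument needs only the original Ozsv{\'a}th--Szab{\'o} inequality for positive-definite fillings. Note also that what you call ``the hard part'' (that the lifted surgery is integral with coefficient $\pm|p|$) is exactly the content of the Montesinos trick, which the paper takes as known; and your model term $d(L(|p|,1),i)$ coincides with the paper's $M_A(i)$ because the disk bundle over $S^2$ with Euler number $|p|$ is sharp.
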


If one is familar with the work in \cite{MR2136532}, the proof is immediate. We will give the proof in Section 2.
We study the H(2)-unknotting number of the pretzel knot $P(13,4,11)$ as an example, to show that the obstruction obtained here works better than other ones that the author knows.
\begin{cor}
\label{pretzel}
The pretzel knot $P(13,4,11)$ has H(2)-unknotting number 2.
\end{cor}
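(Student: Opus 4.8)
The plan is to prove Corollary~\ref{pretzel} in two stages: first establish the upper bound $u_2(P(13,4,11))\le 2$ by exhibiting an explicit sequence of two H(2)-moves unknotting the pretzel knot, and then establish the lower bound $u_2(P(13,4,11))\ge 2$ by showing that $u_2(P(13,4,11))\neq 1$ using Theorem~\ref{main}. Since $P(13,4,11)$ is not the unknot, we automatically have $u_2\ge 1$, so ruling out $u_2=1$ forces $u_2=2$ once the upper bound is in hand. The upper bound is the routine direction: I would find a band that changes $P(13,4,11)$ into a knot of known, smaller H(2)-unknotting number (ideally into a knot already known to have $u_2=1$, or directly into the unknot after a second band); twisting along the $4$-strand region to collapse it to $0$ or $\pm 1$ twists is the natural candidate, reducing the pretzel knot to something standard whose unknotting by a single further band is easy to draw.

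The substance of the argument is the lower bound, and this is where I would apply the machinery assembled in the excerpt. The key is that $P(13,4,11)$ is alternating, so I can take a reduced alternating diagram, color it in checkerboard fashion, and read off the positive-definite Goeritz matrix $Q$ (choosing the pretzel knot or its mirror so that $Q$ is positive definite). I would then compute the determinant $p=\det(Q)=\det(P(13,4,11))$, and verify that $p$ is odd so that the map $M_Q:G\to\mathbb{Q}$ is defined. Next I would compute $M_Q(\alpha)$ for each $\alpha\in G\cong\mathbb{Z}/|p|\mathbb{Z}$ by minimizing $(\xi^t Q^{-1}\xi-n)/4$ over characteristic vectors $\xi$ in each equivalence class; for a pretzel knot the Goeritz matrix is small and sparse, so these minima are a finite, tractable lattice computation.

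With the function $M_Q$ on $G$ tabulated, I would then invoke Theorem~\ref{main} in its contrapositive form: to show $u_2\neq 1$ it suffices to show that \emph{no} isomorphism $\phi:\mathbb{Z}/|p|\mathbb{Z}\to G$ and \emph{no} sign $\epsilon\in\{+1,-1\}$ can simultaneously satisfy $I_{\phi,\epsilon}(i)\equiv 0\pmod 2$ and $I_{\phi,\epsilon}(i)\le 0$ for all $i$. Because $G$ is cyclic, an isomorphism $\phi$ is determined by the image of a generator, i.e.\ by a unit $x\in(\mathbb{Z}/|p|\mathbb{Z})^\times$, so there are only finitely many $\phi$ to check; combined with the two choices of $\epsilon$ this is a finite search. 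I expect the main obstacle to be the bookkeeping in this search: one must correctly match the indexing of $M_Q(\phi(i))$ against the explicit ``standard'' correction-term values $\tfrac14\bigl(\tfrac{1}{|p|}(\tfrac{|p|+(-1)^i|p|}{2}-i)^2-1\bigr)$ appearing in the theorem, and confirm that every candidate $(\phi,\epsilon)$ is obstructed by at least one index $i$, either by a parity failure or by a positivity failure. The delicate point is ensuring the search is genuinely exhaustive over all units $x$ and that the comparison with competing obstructions (linking form, signature) is stated accurately enough to justify the claim that this obstruction ``works better'' than the others the author knows.
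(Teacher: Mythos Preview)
Your proposal is correct and follows essentially the same route as the paper: establish $u_2\le 2$ by exhibiting two bands, then rule out $u_2=1$ by applying Theorem~\ref{main} to the positive-definite Goeritz matrix of the alternating pretzel diagram and performing a finite search over $(\phi,\epsilon)$. The only difference is one of execution: rather than ranging over all $238$ units of $\mathbb{Z}/239\mathbb{Z}$ as you suggest, the paper first evaluates $I_{\phi,\epsilon}(0)$ (which is independent of $\phi$) to force $\epsilon=+1$, then uses the parity condition at $i=1$ to cut the possible $\phi$ down to just two candidates, both of which then fail the sign condition $I_{\phi,+1}(1)\le 0$.
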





\section{Proofs}
\subsection{Proof of Theorem~\ref{main}}
Given a 3-manifold $Y$ and one of its ${\rm spin}^{c}$-structures $s$, an invariant $d(Y, s)$ called correction term is defined for the pair $(Y, s)$ in \cite{MR1957829}. Suppose $Y$ is an oriented rational homology sphere. When $|H^{2}(Y,{\mathbb Z})|$ is odd, there exists a canonical isomorphism between the space ${\rm Spin}^{c}(Y)$ of ${\rm spin}^{c}$-structures on $Y$ and $H^{2}(Y,{\mathbb Z})$. In this case, we replace $s$ in $d(Y, s)$ by the corresponding element in $H^{2}(Y,{\mathbb Z})$. Ozsv{\'a}th and Szab{\'o} studied knots with unknotting number one in \cite{MR2136532}, and here is an general result they obtained (also refer to \cite{MR2388097}).
\begin{theo}[Ozsv{\'a}th-Szab{\'o}\cite{MR2136532}]
\label{os1}
Let $Y$ be a rational homology 3-sphere which is the boundary of a simply-connected positive-definite four-manifold $W$, with $H^{2}(Y,{\mathbb Z})$ of odd order. If the intersection form of $W$ is represented in a basis by the matrix $A$ and $G_{A}$ is the group presented by $A$, then there exists a group isomorphism $\phi: G_{A}\rightarrow H^{2}(Y,{\mathbb Z})$ with
\begin{equation}
\begin{split}
d(Y, \phi (\alpha))&\leq M_{A}(\alpha)\\
and \quad d(Y, \phi (\alpha))&\equiv M_{A}(\alpha) \pmod{2}
\end{split}
\end{equation}
for all $\alpha\in G_{A}$.
\end{theo}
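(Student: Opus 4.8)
The plan is to deduce the statement from the fundamental inequality relating Ozsv\'ath--Szab\'o correction terms to definite fillings, treating the Heegaard Floer package --- correction terms, cobordism maps, their $U$-equivariance and the grading-shift formula --- as available. Because $W$ is positive-definite, the first move is to reverse orientation: $\bar W := -W$ is a simply-connected, negative-definite four-manifold with $\partial \bar W = -Y$ and intersection form $-A$. Deleting an open ball presents $\bar W$ as a cobordism $X$ from $S^3$ to $-Y$, and each $\mathrm{spin}^c$ structure $\mathfrak s$ on $\bar W$ with $c_1(\mathfrak s)=\xi$ induces a $U$-equivariant cobordism map $F^+_{X,\mathfrak s}\colon HF^+(S^3)\to HF^+(-Y,\mathfrak s|_{-Y})$ that is homogeneous of degree $(c_1(\mathfrak s)^2-2\chi(X)-3\sigma(X))/4$.

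For the inequality I would invoke the basic bound for negative-definite fillings (Ozsv\'ath--Szab\'o): for every $\mathfrak s$ one has $c_1(\mathfrak s)^2+\operatorname{rk}H^2(\bar W;\mathbb Z)\le 4\,d(-Y,\mathfrak s|_{-Y})$. Writing $c_1(\mathfrak s)^2=-\xi^t A^{-1}\xi$ with respect to $-A$, using $\operatorname{rk}=n$ and the orientation-reversal identity $d(-Y,\cdot)=-d(Y,\cdot)$, this rearranges to $d(Y,\mathfrak s|_Y)\le (\xi^t A^{-1}\xi-n)/4$. The characteristic condition $\xi_i\equiv A_{ii}\pmod 2$ is insensitive to the sign of the form, so $\xi$ ranges exactly over $\operatorname{char}(A)$; minimizing the right-hand side over all $\xi$ restricting to a fixed $\mathrm{spin}^c$ structure on $Y$ reproduces $M_A$, which gives $d(Y,\phi(\alpha))\le M_A(\alpha)$.

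The isomorphism $\phi$ and the meaning of $[\xi]=\alpha$ come from the long exact sequence of the pair $(W,Y)$. Since $W$ is simply-connected, $H^2(W;\mathbb Z)\to H^2(Y;\mathbb Z)$ is surjective with kernel the image of $H^2(W,Y;\mathbb Z)\cong H_2(W;\mathbb Z)$, which under the intersection form is the column lattice $A\mathbb Z^n$; hence $H^2(Y;\mathbb Z)\cong \mathbb Z^n/A\mathbb Z^n=G_A$, and this is $\phi$. Using the odd-order hypothesis to identify $\operatorname{Spin}^c(Y)$ with $H^2(Y;\mathbb Z)$, the assignment $\xi=c_1(\mathfrak s)\mapsto \mathfrak s|_Y$ descends to the bijection $\operatorname{char}(A)/\!\sim\ \to\operatorname{Spin}^c(Y)$, so $[\xi]=\alpha$ is precisely $\mathfrak s|_Y=\phi(\alpha)$ and the minimization above runs over exactly the right set.

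The congruence comes from the same circle of ideas. Under $F^+_{X,\mathfrak s}$ the image of the degree-zero generator of $HF^+(S^3)$ is a homogeneous element of $HF^+(-Y,\mathfrak s|_{-Y})$ whose degree is, up to the orientation-reversal sign, $(\xi^t A^{-1}\xi-n)/4$, and it lies in the image of $HF^\infty$, i.e.\ in the same tower whose bottom element sits in degree $d(-Y,\mathfrak s|_{-Y})$. Since $U$ lowers the absolute grading by exactly $2$, any two elements of that tower have degrees differing by an even integer, so $(\xi^t A^{-1}\xi-n)/4$ and $d(Y,\mathfrak s|_Y)$ differ by an even integer; specializing to the $\xi$ achieving $M_A(\alpha)$ yields $d(Y,\phi(\alpha))\equiv M_A(\alpha)\pmod 2$. (That $M_A(\alpha)\bmod 2$ is constant on a class, which makes this consistent, is the elementary computation that replacing $\xi$ by $\xi+2Av$ changes $(\xi^t A^{-1}\xi-n)/4$ by $v^t\xi+v^tAv\equiv 0\pmod 2$, using $v^tAv\equiv v^t\xi\pmod 2$ from the characteristic condition.) The main obstacle is none of this bookkeeping but the Floer-theoretic input itself --- the nontriviality and precise grading behaviour of the cobordism maps for definite fillings, which is the genuine analytic heart and which I would cite from Ozsv\'ath--Szab\'o rather than reprove; the residual difficulty is keeping the orientation reversal and the two sign conventions consistent so that the inequality lands in the correct direction.
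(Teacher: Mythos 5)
The paper does not prove this theorem at all: it is quoted verbatim from Ozsv\'ath--Szab\'o \cite{MR2136532}, where it is deduced from the definite-filling inequality of \cite{MR1957829}, so there is no internal proof to compare against. Your sketch is essentially the original argument and is correct in outline: reverse orientation to get a simply-connected negative-definite filling of $-Y$, apply the bound $c_{1}(\mathfrak{s})^{2}+\operatorname{rk}H^{2}\leq 4\,d(-Y,\mathfrak{s}|_{-Y})$ together with $d(-Y,\cdot)=-d(Y,\cdot)$, build $\phi$ from the exact sequence of $(W,Y)$, and extract the congruence from the grading-shift formula and the fact that $U$ has degree $-2$ on the towers. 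Your parenthetical observation that the mod-$2$ value of $(\xi^{t}A^{-1}\xi-n)/4$ is constant on equivalence classes is also the right consistency check, and you are correct that the odd-order hypothesis is what makes the paper's equivalence $\xi-\zeta\in A\mathbb{Z}^{n}$ agree with the spin$^{c}$-restriction equivalence $\xi-\zeta\in 2A\mathbb{Z}^{n}$: both differences are componentwise even, and $A$ is invertible mod $2$ since $\det A$ is odd.

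One technical slip should be repaired in the congruence step: the image under $F^{+}_{X,\mathfrak{s}}$ of the degree-zero generator of $HF^{+}(S^{3})$ may well be zero, and a zero element carries no grading information, so the argument as written can degenerate. The standard fix is to push through a sufficiently high tower element $U^{-k}\cdot\Theta$ instead: since $F^{\infty}_{X,\mathfrak{s}}$ is an isomorphism for a negative-definite cobordism with $b_{1}=0$, and the projection $HF^{\infty}\to HF^{+}$ is injective in sufficiently high degrees, the image of $U^{-k}\cdot\Theta$ is a nonzero homogeneous element of the tower of $HF^{+}(-Y,\mathfrak{s}|_{-Y})$, whose grading is then congruent mod $2$ to $d(-Y,\mathfrak{s}|_{-Y})$, and the computation concludes exactly as you state. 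With that adjustment your write-up is a faithful reconstruction of the cited proof; the genuinely hard input (nontriviality of the cobordism maps and the definite-filling inequality) is, as you say, appropriately cited rather than reproved.
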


When $K$ is an alternating knot in $S^{3}$, the correction terms for $\Sigma (K)$ have an extremely easy combinatorial description as follows.
\begin{theo}[Ozsv{\'a}th-Szab{\'o}\cite{MR2136532, MR2141852}]
\label{os2}
If $K$ is an alternating knot and $Q$ denotes a Goeritz matrix associated to a reduced alternating projection of $K$, and $G$ is the group presented by $Q$, then there is an isomorphism $\varphi: G\rightarrow H^{2}(\Sigma(K), {\mathbb Z})$, with the property that $$d(\Sigma (K), \varphi (\alpha))= M_{Q}(\alpha)$$ for all $\alpha\in G$.
\end{theo}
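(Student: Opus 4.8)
The plan is to realize $\Sigma(K)$ as the boundary of a positive-definite four-manifold whose intersection form is exactly $Q$, read off the upper bound $d\le M_{Q}$ from Theorem~\ref{os1}, and then squeeze from below using the opposite checkerboard coloring. First I would recall the standard four-manifold attached to a spanning surface: starting from the given reduced alternating diagram and its checkerboard coloring, push the black spanning surface into the interior of $B^{4}$ and let $W$ be the double cover of $B^{4}$ branched along it. Then $W$ is a smooth, simply-connected four-manifold with $\partial W=\Sigma(K)$, and a classical computation identifies its intersection form, in the basis coming from the bounded black regions, with the Goeritz matrix $Q$. Since the diagram is reduced and alternating, the two checkerboard Goeritz forms are definite of opposite signs, so after fixing the coloring for which $Q$ is positive-definite, $W$ is a simply-connected positive-definite filling of $\Sigma(K)$. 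As $|H^{2}(\Sigma(K),\mathbb{Z})|=|p|$ is odd, Theorem~\ref{os1} applies and produces an isomorphism $\varphi:G\to H^{2}(\Sigma(K),\mathbb{Z})$ with $d(\Sigma(K),\varphi(\alpha))\le M_{Q}(\alpha)$ and $d(\Sigma(K),\varphi(\alpha))\equiv M_{Q}(\alpha)\pmod{2}$ for every $\alpha\in G$. This already gives the inequality half and the correct parity; it remains to upgrade the inequality to an equality.

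Next I would produce the matching lower bound from the other coloring. Pushing the white spanning surface into $B^{4}$ yields a second simply-connected filling $W'$ of $\Sigma(K)$ whose intersection form is the second Goeritz matrix $Q'$, which is negative-definite. Reversing orientation, $\overline{W'}$ is a positive-definite filling of $-\Sigma(K)=\Sigma(\overline{K})$ with intersection form $\overline{Q}:=-Q'$, the positive-definite Goeritz matrix of the mirror diagram. Applying Theorem~\ref{os1} to $\overline{W'}$ gives an isomorphism $\psi$ onto $H^{2}(-\Sigma(K),\mathbb{Z})$ with $d(-\Sigma(K),\psi(\beta))\le M_{\overline{Q}}(\beta)$; combined with the orientation-reversal identity $d(-Y,s)=-d(Y,s)$ this reads $d(\Sigma(K),s)\ge -M_{\overline{Q}}(\beta)$ whenever $s$ corresponds to $\psi(\beta)$.

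The crux is to check that the upper and lower bounds coincide spin$^{c}$-structure by spin$^{c}$-structure, which pins $d(\Sigma(K),\varphi(\alpha))=M_{Q}(\alpha)$ and finishes the proof. This amounts to two things: matching the labels $\varphi(\alpha)$ and $\psi(\beta)$ that name the same element of $H^{2}(\Sigma(K),\mathbb{Z})$, and then verifying the numerical identity $M_{Q}(\alpha)=-M_{\overline{Q}}(\beta)$ for matched classes. The label-matching is governed by the linking form of $\Sigma(K)$: the discriminant forms of the two definite lattices $(\mathbb{Z}^{n},Q)$ and $(\mathbb{Z}^{m},\overline{Q})$ present this linking form with opposite signs, which is transparent from the fact that gluing $W$ to $\overline{W'}$ along $\Sigma(K)$ yields a closed positive-definite four-manifold $X$ whose unimodular intersection form is an index-$|p|$ overlattice of $Q\oplus\overline{Q}$. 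I expect the numerical identity to be the main obstacle, since it is emphatically \emph{not} formal for an arbitrary anti-isomorphic pair of definite lattices—a general definite filling need not be sharp—and it is exactly here that the special combinatorics of a reduced alternating diagram must be used. I would establish it either by proving directly that $\Sigma(K)$ is an $L$-space and computing its correction terms through the branched double-cover exact sequence, or by an induction on the alternating diagram that reduces the claimed lattice identity to a base case, thereby showing that both Goeritz fillings are simultaneously sharp.
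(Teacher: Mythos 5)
The paper itself offers no proof of Theorem~\ref{os2}: it is quoted from Ozsv{\'a}th--Szab{\'o} \cite{MR2136532, MR2141852}, so your attempt must be measured against their argument. Your first half is correct and standard: the double cover of $B^{4}$ branched over the pushed-in black checkerboard surface is a simply-connected filling of $\Sigma(K)$ whose intersection form is the Goeritz form (Gordon--Litherland), for a reduced alternating diagram the two colorings give definite forms of opposite sign, and Theorem~\ref{os1} then yields $d(\Sigma(K),\varphi(\alpha))\leq M_{Q}(\alpha)$ together with the mod $2$ congruence; the mirrored white filling similarly yields the lower bound $-M_{\overline{Q}}$. All of that is fine.

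The genuine gap is at exactly the point you flag and then do not close. The entire content of Theorem~\ref{os2} is the \emph{equality}, and your squeeze reduces it to the lattice identity $M_{Q}(\alpha)=-M_{\overline{Q}}(\beta)$ for matched classes, which---as you concede---is essentially equivalent to the simultaneous sharpness of the two Goeritz fillings and is not formal for an arbitrary pair of definite fillings. You then merely name two possible routes (L-space property plus the branched-cover exact sequence, or an induction on the diagram) without executing either, so nothing in the proposal actually establishes the equality. In Ozsv{\'a}th--Szab{\'o}'s proof this is the substantive work: they show by induction on the crossings of a reduced alternating diagram, using the surgery exact triangle and the fact that the maps in it are induced by the relevant $2$-handle cobordisms, that the checkerboard filling is \emph{sharp}; this in turn rests on the theorem of \cite{MR2141852} that $\Sigma(K)$ is an L-space with $\mathrm{rk}\,\widehat{HF}=\det(K)$, itself proved by an induction exploiting $\det(K)=\det(K_{0})+\det(K_{1})$ for the two resolutions of a crossing. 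Note also that once sharpness of one filling is proved, equality $d=M_{Q}$ follows immediately, so the white-surface lower bound in your scheme is not an independent ingredient but a repackaging of the same unproven sharpness claim. Your proposal thus has the right skeleton but leaves the theorem's crucial step---the inductive sharpness argument---unproven.
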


\begin{proof}[Proof of Theorem \ref{main}]
If the H(2)-unknotting number of $K$ is equal to one, then by Montesinos's trick \cite{MR0380802} we have $\Sigma (K)=\epsilon\cdot S^{3}_{|p|}(C)$ for some knot $C\subset S^{3}$ and $\epsilon \in \{+1, -1\}$. Here $p$ is equal to ${\rm det}(K)$. The manifold $-S^{3}_{|p|}(C)$ represents the manifold with reversed orientation. Therefore $\epsilon\cdot\Sigma (K)=S^{3}_{|p|}(C)$ bounds a four-manifold $W$, which is obtained by attaching a 2-handle to a four-ball along $C$ with framing $|p|$. The intersection form of $W$ is $A=(|p|)$. In this case we have that $G_{A}={\mathbb Z}/|p|{\mathbb Z}$, that $W$ is a simply-connected 4-manifold and that $H^{2}(S_{|p|}^{3}(C),{\mathbb Z})\cong{\mathbb Z}/|p|{\mathbb Z}$.

By Theorem~\ref{os1}, there exists a group isomorphism $\phi: {\mathbb Z}/|p|{\mathbb Z} \rightarrow H^{2}(S_{|p|}^{3}(C),{\mathbb Z})$ with
\begin{equation}
\begin{split}
d(\epsilon\cdot \Sigma (K), \phi (i))=\epsilon\cdot d(\Sigma (K), \phi (i))&\leq M_{A}(i)\\
{\rm and} \quad \epsilon\cdot d(\Sigma (K), \phi (i))&\equiv M_{A}(i) \pmod{2}
\end{split}
\end{equation}
for all $i\in {\mathbb Z}/|p|{\mathbb Z}$. It is easy to check that $M_{A}(i)=\frac{1}{4}(\frac{1}{|p|}(\frac{|p|+(-1)^{i}|p|}{2}-i)^{2}-1)$. Now Theorem~\ref{main} follows from Theorem~\ref{os2}.

\end{proof}

\begin{figure}[h]
	\centering
		\includegraphics[width=0.6\textwidth]{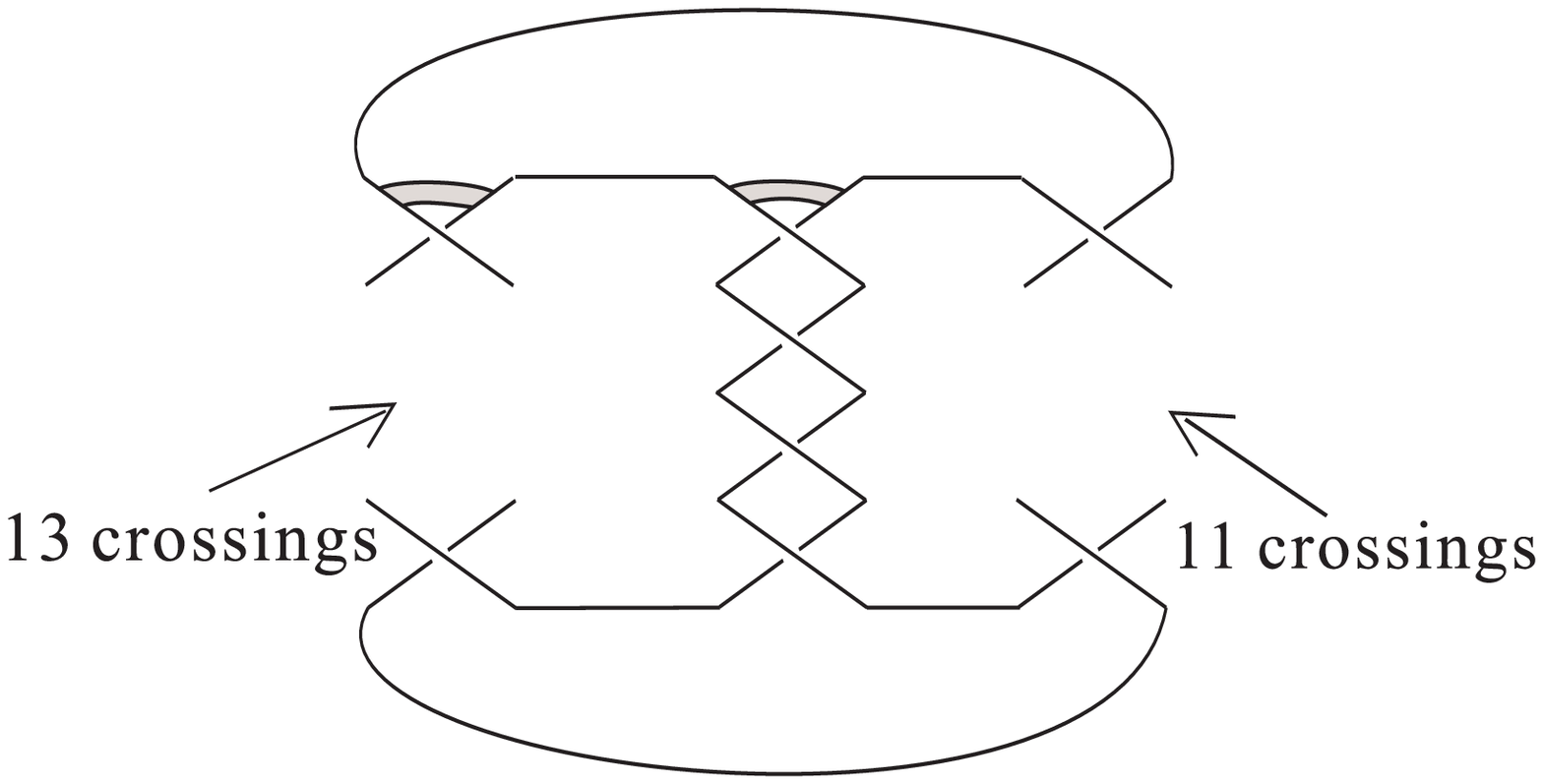}
		\caption{The pretzel knot $P(13,4,11)$.}
	\label{fig:fig3}
\end{figure}

\subsection{An example}
The pretzel knot $K=P(13,4,11)$ is a knot as shown in Figure~\ref{fig:fig3}. A Goeritz matrix associated to this diagram is 
$$Q=
\left( \begin{array}{ccccc}
17       & -4   \\
-4        & 15  
\end{array} \right) 
,$$ and the determinant is ${\rm det}(Q)={\rm det}(K)=239$. Suppose $G$ is the group presented by $Q$. In fact, the group $G$ is isomorphic to ${\mathbb Z}/239{\mathbb Z}$. In the following calculation, we take the vector $(0,1)^{t}$ as a generator of $G$. By calculation, it is easy to see that for any isomorphism $\phi : {\mathbb Z}/239{\mathbb Z}\longrightarrow {\mathbb Z}/239{\mathbb Z}$ there is $$I_{\phi, \epsilon}(0)=\epsilon\cdot M_{Q}(\phi(0))-119/2=(\epsilon\cdot11-119)/2.$$ Since $I_{\phi, \epsilon}(0)$ has to be an even number, therefore we have $\epsilon=+1$. Next we obtain that $I_{\phi, +1}(1)=M_{Q}(\phi(1))+119/478$. To guarantee that $I_{\phi, +1}(1)$ is an even number, the isomorphism $\phi$ has to be either $\phi_{1}=15$ or $\phi_{2}=224$. By calculation, we see that $I_{\phi_{1}, +1}(1)=I_{\phi_{2}, +1}(1)=4$, a positive number, which conflicts with the necessary condition stated in Theorem~\ref{main}. Therefore the H(2)-unknotting number of $P(13,4,11)$ has to be at least two. On the other hand, the knot $P(13,4,11)$ can be changed into the unknot by adding two twisted bands as shown in Figure~\ref{fig:fig3}. Hence the H(2)-unknotting number of $P(13,4,11)$ is two. This completes the proof of Corollary~\ref{pretzel}.

\subsection{Comparisons with other criterions}

There have been many criterions and properties which can be used to bound the H(2)-unknotting number of a knot. We want to apply them to the knot $P(13,4,11)$ and compare the results with Corollary~\ref{pretzel}.

The first one is Lickorish's obstruction that we recalled in the beginning. But it does not work for the pretzel knot $K=P(13,4,11)$. It is known that the Goeritz matrix $Q$ is a presentation of $H_{1}(\Sigma(K),{\mathbb Z})$, and $Q^{-1}$ represents the linking form $\lambda$. From Section 2.2, we known that $I_{\phi_{1}, +1}(1)$ is an integer. This implies that $\lambda(g,g)=1/239$ over ${\mathbb Q}/{\mathbb Z}$ for $g=(0,15)^{t}$. The vector $g$ can work as a generator of $H_{1}(\Sigma(K),{\mathbb Z})$.


There are two invariants of knots which are closely related to H(2)-unknotting number. Given a knot $K \subset S^{3}$, the crosscap number \cite{MR1362987} of $K$ is defined as follows:
$$\gamma(K)=\min\left\{\beta_{1}(F)\left|\text{$F$ is a non-orientable connected surface in $S^{3}$ and $\partial F=K$}\right.\right\}.$$
The four-dimensional crosscap number of $K$ \cite{MR1690998}, which we denote $\gamma^{*}(K)$ here, is by name defined as follows:
\begin{equation*}
\gamma^{*}(K)=\min\left\{\beta_{1}(F)\left|
\begin{split}
&\text{$F$ is a non-orientable connected smooth surface in $B^{4}$ and}\\
& \partial F=K\subset \partial B^{4}=S^{3}
\end{split}
\right.\right\}.
\end{equation*}
Their relation with H(2)-unknotting number is as follows. We give a proof here since we have not found any reference of it.
\begin{lemma}
\label{relation}
Given a knot $K\subset S^{3}$, we have $\gamma^{*}(K)\leq u_{2}(K) \leq \gamma(K)$.
\end{lemma}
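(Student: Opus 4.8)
The plan is to prove the two inequalities separately, in each case realizing H(2)-moves as non-orientable band cobordisms, in the four-dimensional setting for the lower bound and in the three-dimensional setting for the upper bound.

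For the lower bound $\gamma^{*}(K)\leq u_{2}(K)$, I would read a sequence of $m=u_{2}(K)$ H(2)-moves taking $K$ to the unknot as a cobordism inside $S^{3}\times[0,1]$. Each H(2)-move is the attachment of a single twisted band, hence a saddle move, so it contributes one $1$-handle to the cobordism $C$ from $K\subset S^{3}\times\{0\}$ to the unknot $U\subset S^{3}\times\{1\}$. Capping $U$ off by a disk in a four-ball glued to the top of $S^{3}\times[0,1]$ produces a surface $F\subset B^{4}$ with $\partial F=K$. Starting from the annulus $K\times I$ with $\chi=0$ and attaching $m$ bands gives $\chi(C)=-m$, hence $\chi(F)=1-m$ and $\beta_{1}(F)=m=u_{2}(K)$. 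The surface $F$ is connected because every intermediate object is a single knot, and it is non-orientable because each band in an H(2)-move is half-twisted: an orientation-preserving (coherent) band move on a knot necessarily yields a two-component link, so keeping a knot at every stage forces each band, and therefore the whole cobordism, to be non-orientable. Thus $F$ is admissible for $\gamma^{*}$ and $\gamma^{*}(K)\leq u_{2}(K)$.

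For the upper bound $u_{2}(K)\leq\gamma(K)$, I would start from a non-orientable spanning surface $F\subset S^{3}$ with $\partial F=K$ and $\beta_{1}(F)=\gamma(K)=:n$, and exhibit $K$ as the boundary of a disk with $n$ twisted bands attached. Concretely, I would choose a spine of $F$, a wedge of $n$ circles onto which $F$ deformation retracts, whose regular neighborhood recovers $F$ as a disk $D$ together with $n$ bands, one for each loop of the spine. The regular neighborhood of an orientation-reversing loop is a half-twisted band whose addition keeps the number of boundary circles equal to one, whereas an orientation-preserving loop would momentarily split the boundary into two circles. To avoid the latter I would select the spine so that every loop is orientation-reversing: since $w_{1}(F)\neq 0$, one can choose a $\mathbb{Z}/2$-basis $\gamma_{1},\dots,\gamma_{n}$ of $H_{1}(F;\mathbb{Z}/2)$ with $\langle w_{1},\gamma_{i}\rangle=1$ for every $i$, and realize it by disjoint embedded loops forming a spine. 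With such a decomposition, removing the bands one at a time gives a sequence of band surgeries on the boundary; each removal keeps the boundary connected and is performed along a half-twisted band, so each is a genuine H(2)-move taking a knot to a knot, and after deleting all $n$ bands the boundary is $\partial D$, an unknot. Hence $n$ H(2)-moves unknot $K$ and $u_{2}(K)\leq n=\gamma(K)$.

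I expect the main obstacle to be the bookkeeping in the upper bound, namely guaranteeing that the boundary remains connected after every single band removal; this is precisely why the all-crosscap spine, with every loop orientation-reversing, must be arranged rather than using an arbitrary band decomposition of $F$. The four-dimensional inequality is comparatively routine once one verifies that the H(2)-cobordism is non-orientable, which follows from the observation that a knot cannot be carried to a knot by an orientation-preserving band move.
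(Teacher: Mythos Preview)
Your proposal is correct and follows essentially the same approach as the paper: both inequalities are proved by interpreting H(2)-moves as non-orientable band attachments, building a disk-with-bands surface in $B^{4}$ for the lower bound and decomposing a crosscap-realizing spanning surface into a disk with half-twisted bands for the upper bound. The only cosmetic differences are that the paper constructs the $B^{4}$ surface directly (rather than as a capped cobordism) and, for the upper bound, phrases the decomposition in terms of co-core arcs $\tau_{i}$ rather than spine loops; your explicit use of an all-crosscap spine to ensure connectivity at every intermediate stage is in fact a bit more careful than the paper's statement.
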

\begin{proof}
The knot $K$ can be reconstructed from the unknot by adding $u_{2}(K)$ twisted bands successively. Let $D$ be a disk bounded by the unknot and $b_{1},b_{2},\ldots,b_{u_{2}(K)}$ be the bands added to the boundary of $D$. Then $F:=D\cup \bigcup_{i=1}^{u_{2}(K)}b_{i}$ is a non-orientable surface in $B^{4}$ with $\partial F=K$. We have $\gamma^{*}(K)\leq \beta_{1}(F)=u_{2}(K)$. The second inequality is proved as follows. Suppose $S$ is a non-orientable surface in $S^{3}$ which realizes the crosscap number of $K$. Namely we have $\beta_{1}(S)=\gamma(K)$ and $\partial S=K$. Then there are $\gamma(K)$ disjoint essential arcs in $S$, say $\tau_{1},\tau_{2},\cdots,\tau_{\gamma(K)}$, such that $S- \tau_{i}$ has one boundary component for $i=1,2,\cdots,\gamma(K)$ and $S-\bigcup_{i=1}^{\gamma(K)}\tau_{i}$ is a disk. If we add twisted bands to $K$ along $\tau_{i}$ for $i=1,2,\cdots,\gamma(K)$, the resulting knot is the unknot. Therefore we have $u_{2}(K) \leq \gamma(K)$.
\end{proof}
Ichihara and Mizushima \cite{MR2556097} calculated the crosscap numbers of pretzel knots. According to their calculation, the crosscap numbers of $P(13,4,11)$ is two, but the four-dimensional crosscap number of it is unknown. Therefore the H(2)-unknotting number of $P(13,4,11)$ cannot be determined by Lemma~\ref{relation} so far. Kanenobu and Miyazawa \cite{MR2573402} introduced some criterions for bounding the H(2)-unknotting number of a knot, but their methods cannot be applied to the knot $P(13,4,11)$, either.

\bibliographystyle{siam}
\bibliography{bao}

\end{document}